\newtheorem{thm}{Theorem}[section]
\newtheorem{cor}[thm]{Corollary}
\newtheorem{lem}[thm]{Lemma}
\newtheorem*{conj*}{Denjoy's Conjecture}
\theoremstyle{definition}
\newtheorem{defin}[thm]{Definition}
\numberwithin{equation}{section}
\begin{document}


\baselineskip=17pt



\title[Universal commensurability augmented Teichm\"uller space and moduli space]{Universal commensurability augmented Teichm\"uller space and  moduli space}

\author[G. Hu\ H. Miyachi\ Y. Qi]{ Guangming Hu, \  Hideki Miyachi\ and Yi Qi}
\address{Guangming Hu \newline
 College of Science, Jinling Institute of  Technology,
  Nanjing, 211169, P.R. China.}
 \email{18810692738@163.com}

\address{ Hideki Miyachi \newline
School of Mathematics and Physics, College of Science and Engineering, Kanazawa University,
Kakuma-machi, Kanazawa, Ishikawa, 920-1192, Japan}
\email{miyachi@se.kanazawa-u.ac.jp}
\address{  Yi Qi
\newline
School of Mathematics and Systems Science, Beihang University,
 Beijing, 100191, P. R. China  }
\email{yiqi@buaa.edu.cn}

\thanks{This work is partially supported by NSFC Grant Number  11871085 and JSPS KAKENHI Grant Number 16K05202}

\date{}

\begin{abstract}
It is known that every  finitely unbranched  covering $\alpha:\widetilde{S}_{g(\alpha)}\rightarrow S$ of a compact Riemann surface $S$ with genus $g\geq2$ induces an isometric embedding $\Gamma_{\alpha}$ from the Teichm\"uller space $T(S)$ to the Teich\"uller space $T(\widetilde{S}_{g(\alpha)})$. Actually, it has been showed that the isometric embedding $\Gamma_{\alpha}$ can be extended isometrically to  the augmented Teichm\"{u}ller space $\widehat{T}(S)$ of $T(S)$. Using this result, we construct a directed limit $\widehat{T}_{\infty}(S)$ of augmented  Teichm\"uller spaces, where the index runs over all finitely unbranched  coverings of $S$.  Then, we show that the action of the universal commensurability modular group $Mod_{\infty}(S)$ can extend isometrically on $\widehat{T}_{\infty}(S)$. Furthermore, for any $X_{\infty}\in T_{\infty}(S)$, its orbit of the action of the universal commensurability modular group  $Mod_{\infty}(S)$  on the universal commensurability augmented Teichm\"uller space $\widehat{T}_{\infty}(S)$ is dense. Finally, we also construct a directed limit $\widehat{M}_{\infty}(S)$ of augmented moduli spaces by characteristic towers and show that the subgroup $Caut(\pi_{1}(S))$ of $Mod_{\infty}(S)$  acts on $\widehat{T}_{\infty}(S)$ to produce $\widehat{M}_{\infty}(S)$ as the quotient.
\end{abstract}

\subjclass[2010]{32G15, 30F60, 57M10.}

\keywords{Augmented Teichm\"uller space, commensurability modular group, augmented moduli space, characteristic tower.}

\maketitle

\section{Introduction and main results}
Let $S$ be a closed orientable surface of genus $g\geq2$. Let  $T(S)$ be the Teichm\"uller space of marked hyperbolic structures on $S$. There is a natural complete metric $d_{T (S)}$ on $T(S)$,  called the Teichm\"uller metric. The moduli space  $M(S)$ is defined  as the equivalent classes of $T(S)$ by the action of the modular group $Mod(S)$ and has  a  quotient metric $d_{M(S)}$, induced by $d_{T (S)}$.  A known compactification of $M(S)$ is called the Deligne-Mumford compactification, which is introduced in \cite{DMum} by  Deligne and  Mumford.  Abikoff introduced a partial compactification of $T(S)$, named the augmented Teichm\"{u}ller space $\widehat{T}(S)$ \cite{AW1,AW2}, whose orbit space $\widehat{M}(S)$ by the action of $Mod(S)$  is called the augmented moduli space. It is known that the augmented moduli space is homeomorphic to the Deligne-Mumford compactification \cite{Har1}.

In this paper, we consider the related issues of augmented Teichm\"{u}ller spaces and augmented moduli spaces respectively, which are divided into two parts as follows.

\subsection{Universal commensurability augmented Teichm\"uller space}

Let $\alpha:\widetilde{S}_{g(\alpha)}\rightarrow S$ be  a finitely unbranched covering  of $S$, where $\widetilde{S}_{g(\alpha)}$ is a compact Riemann surface of genus $g(\alpha)$.
It is known that the covering $\alpha$ can induce an  isometrically (endowed with Teichm\"uller metrics) holomorphic embedding $\Gamma_{\alpha} :T(S)\rightarrow T(\widetilde{S}_{g(\alpha)})$.
In \cite{BNS}, Biswas, Nag and  Sullivan studied the directed system of Teichm\"uller spaces arising from these embeddings $\Gamma_{\alpha} :T(S)\rightarrow T(\widetilde{S}_{g(\alpha)})$, as $\alpha$ runs over all finitely unbranched  coverings of $S$. This directed limit of Teichm\"uller spaces, denoted by $T_{\infty}(S)$, is called the universal commensurability Teichm\"uller space and its completion is named the Teichm\"uller space for the universal hyperbolic solenoid (See \cite{Odd,PS,Sar1,Sar2,Sar3} for details of the solenoid).  The universal commensurability Teichm\"uller space $T_{\infty}(S)$ carries a natural Weil-Petersson K\"ahler structure  from scaling the Weil-Petersson pairing on each finite dimensional stratum. Then $T_{\infty}(S)$ has a biholomorphic automorphism group, called the universal commensurability modular group $Mod_{\infty}(S)$.

In \cite{BNS},  the statement that the orbits of the action of  $Mod_{\infty}(S)$ on $T_{\infty}(S)$  are dense is actually equivalent to the following conjecture:

\noindent{\bf Ehrenpreis Conjecture }\; {\em  Let $S_{1}$ and $S_{2}$ be compact Riemann surfaces of genus at least two and $K>1$. Then there are compact Riemann surfaces $\widetilde{S}_{1}$ and $\widetilde{S}_{2}$ with  finitely holomorphic unbranched coverings $\alpha_{1}:\widetilde{S}_{1}\rightarrow S_{1}$ and $\alpha_{2}:\widetilde{S}_{2}\rightarrow S_{2}$ and a $K-$quasiconformal mapping $\widetilde{f}:\widetilde{S}_{1}\rightarrow \widetilde{S}_{2}$.}

In \cite{KM}, Kahn and  Markovic developed the notion of the good pants homology and showed that it agrees with the standard homology on closed surfaces. Then they solved the Ehrenpreis Conjecture.

  In \cite{BMN},  Biswas, Mitra and Nag  proved that the action of $Mod_{\infty}(S)$ can extend continuously on the directed limit of Thurston compactifications, and  the orbits of the action of  $Mod_{\infty}(S)$ on the boundary of  the directed limit of Thurston compactifications  are dense.

In \cite{HQ}, it has been showed that the isometric embedding $\Gamma_{\alpha}$ can extend isometrically on  the augmented Teichm\"{u}ller space of $T(S)$.
So we naturally  ask whether or not the results of Thurston compactifications can be generalized to augmented Teichm\"{u}ller spaces. Here we have the following

\begin{thm}\label{1} The action of the universal commensurability modular group  $Mod_{\infty}(S)$ can extend on the universal commensurability augmented Teichm\"uller space $\widehat{T}_{\infty}(S)$ isometrically, endowed with the Teichm\"uller metric.
\end{thm}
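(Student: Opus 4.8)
The plan is to construct the extended action one covering level at a time and then pass to the direct limit, letting the isometric extension of the bonding maps from \cite{HQ} do the analytic work. First I would use the concrete description of an element $\phi\in Mod_{\infty}(S)$ as a virtual automorphism represented by a commensurability: a pair of finite unbranched coverings $\alpha:\widetilde{S}_{\alpha}\to S$ and $\beta:\widetilde{S}_{\beta}\to S$ together with a homeomorphism $f:\widetilde{S}_{\alpha}\to\widetilde{S}_{\beta}$ compatible with the covering structures. Such an $f$ induces by change of marking a biholomorphic Teichm\"uller isometry $f_{*}:T(\widetilde{S}_{\alpha})\to T(\widetilde{S}_{\beta})$, and the coherent family of these isometries across the directed system is exactly the given action of $\phi$ on $T_{\infty}(S)$.

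Next, for each finite-level isometry $f_{*}$ I would produce an extension $\widehat{f}_{*}:\widehat{T}(\widetilde{S}_{\alpha})\to\widehat{T}(\widetilde{S}_{\beta})$ to the augmented spaces. Since $f$ is a surface homeomorphism it carries weighted multicurves to weighted multicurves, so it transports the pinching (nodal) data of a boundary point of $\widehat{T}(\widetilde{S}_{\alpha})$ to a boundary point of $\widehat{T}(\widetilde{S}_{\beta})$; this defines $\widehat{f}_{*}$ stratum by stratum. I would then check that $\widehat{f}_{*}$ is an isometry for the extended Teichm\"uller metric. This is the marking-change analogue of the isometric extension of $\Gamma_{\alpha}$ established in \cite{HQ}, and the same metric control on the strata applies, so the estimate is essentially inherited rather than reproven.

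The main work is the compatibility with the direct-limit structure, so that the family $\{\widehat{f}_{*}\}$ descends to a single self-map $\widehat{\phi}$ of $\widehat{T}_{\infty}(S)$. Concretely I must show that $\widehat{f}_{*}$ commutes with the extended bonding embeddings $\widehat{\Gamma}$ when one passes to a common refinement of the coverings, and that the resulting map does not depend on the chosen representative $(\alpha,\beta,f)$. On the open dense stratum $T_{\infty}(S)$ this commutativity is precisely the defining compatibility of the $Mod_{\infty}(S)$ action; the issue is to propagate it to the noded strata, where a single node downstairs lifts to several nodes upstairs and the representing homeomorphisms must respect this lifting simultaneously at every level of the tower. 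Because each augmented space is Hausdorff and $T_{\infty}(S)$ is dense in $\widehat{T}_{\infty}(S)$, any two continuous maps agreeing on $T_{\infty}(S)$ coincide; hence it is enough to verify the commutation on the dense subset together with the continuity of $\widehat{f}_{*}$, the latter following from the previous step.

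Finally, recall that the metric on $\widehat{T}_{\infty}(S)$ is the one for which every canonical map from a finite level $\widehat{T}(\widetilde{S}_{\alpha})$ is isometric, this being well defined because the bonding maps $\widehat{\Gamma}$ are isometries. Since $\widehat{\phi}$ restricts on each finite level to the isometry $\widehat{f}_{*}$, distances computed at any common level are preserved, so $\widehat{\phi}$ is an isometry of $\widehat{T}_{\infty}(S)$; the homomorphism relation $\widehat{\phi\psi}=\widehat{\phi}\,\widehat{\psi}$ again holds by density from its validity on $T_{\infty}(S)$, giving the isometric extension of the whole group action. I expect the genuinely delicate point to be the coherent transport of nodal data across the tower in the third step, rather than any metric estimate, which is inherited from \cite{HQ}.
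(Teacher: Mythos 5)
Your proposal is correct and follows essentially the same route as the paper: both reduce an element of $Mod_{\infty}(S)$ to data given by pairs of coverings (the paper composes $\widehat{\Gamma}_{\infty}(\beta)\circ\widehat{\Gamma}_{\infty}^{-1}(\alpha)$ around cycles of morphisms), extend the finite-level isometries to augmented Teichm\"uller spaces via the extension lemma of \cite{HQ}, and pass to the direct limit where distances are computed at a common finite level. The only difference is presentational: you make the compatibility with the bonding maps and the independence of the chosen representative explicit via a density/Hausdorff argument, points the paper leaves implicit in the definition of $\widehat{T}_{\infty}(S)$.
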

\noindent{\bf Remark} Since  the universal commensurability Teichm\"uller space $T_{\infty}(S)$ has a naturally complex manifold structure  from  each finite dimensional stratum, the universal commensurability modular group $Mod_{\infty}(S)$, acting on $T_{\infty}(S)$, is a biholomorphic automorphism group. However, the augmented Teichm\"uller space has no manifold structure, hence $\widehat{T}_{\infty}(S)$  has no manifold structure and the element of  $Mod_{\infty}(S)$, acting on $\widehat{T}_{\infty}(S)$, is not  holomorphic.

Combined with the Ehrenpreis conjecture, we  have the result as follows:
\begin{thm}\label{1} For any $X_{\infty}\in T_{\infty}(S)$, its orbit of the action of the universal commensurability modular group  $Mod_{\infty}(S)$  on the universal commensurability augmented Teichm\"uller space $\widehat{T}_{\infty}(S)$ is dense.
\end{thm}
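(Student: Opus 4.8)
The plan is to deduce the statement by chaining two density facts: the density of $Mod_{\infty}(S)$-orbits already \emph{inside} the open part $T_{\infty}(S)$, and the density of $T_{\infty}(S)$ itself inside the augmented space $\widehat{T}_{\infty}(S)$. Granting these, write $\mathcal{O}=Mod_{\infty}(S)\cdot X_{\infty}$ for the orbit of a fixed $X_{\infty}\in T_{\infty}(S)$; note that $\mathcal{O}\subseteq T_{\infty}(S)$ since each modular element is an automorphism of $T_{\infty}(S)$. The first fact gives $\overline{\mathcal{O}}\supseteq T_{\infty}(S)$, and the second gives $\overline{\mathcal{O}}\supseteq\overline{T_{\infty}(S)}=\widehat{T}_{\infty}(S)$, which is the asserted density. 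The role of the preceding isometric-extension theorem here is only to guarantee that $Mod_{\infty}(S)$ acts on $\widehat{T}_{\infty}(S)$ by homeomorphisms, so that orbits and closures behave as in the topological argument; it is not otherwise used.

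First I would establish density of orbits inside $T_{\infty}(S)$. By the analysis of Biswas, Nag and Sullivan \cite{BNS}, the statement that every $Mod_{\infty}(S)$-orbit in $T_{\infty}(S)$ is dense is equivalent to the Ehrenpreis Conjecture: two points of $T_{\infty}(S)$ represent compatible systems of finitely unbranched covers, and a modular element moving one system arbitrarily close to the other is exactly a common cover together with a $K$-quasiconformal map with $K$ near $1$, as furnished by Ehrenpreis. Since Kahn and Markovic \cite{KM} resolved the Ehrenpreis Conjecture, I may invoke this equivalence to conclude that $\mathcal{O}$ is dense in $T_{\infty}(S)$ for every $X_{\infty}\in T_{\infty}(S)$.

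Next I would prove that $T_{\infty}(S)$ is dense in $\widehat{T}_{\infty}(S)$. At each finite level, the augmented Teichm\"uller space $\widehat{T}(\widetilde{S}_{g(\alpha)})$ is the partial compactification of $T(\widetilde{S}_{g(\alpha)})$ obtained by adjoining strata of noded surfaces, and every noded surface is a limit, in the augmented topology, of smooth surfaces as the lengths of the pinching curves tend to zero; hence $T(\widetilde{S}_{g(\alpha)})$ is dense in $\widehat{T}(\widetilde{S}_{g(\alpha)})$. I would then transport this through the directed system: because the extended embeddings $\widehat{\Gamma}_{\alpha}$ are isometric, hence continuous, and are compatible with the bonding maps, a point of the directed limit $\widehat{T}_{\infty}(S)$ that lies at finite level in the closure of the image of $T(\widetilde{S}_{g(\alpha)})$ also lies in the closure of $T_{\infty}(S)$. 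Approximating a given boundary point by smooth surfaces level by level and passing to the directed limit yields the density of $T_{\infty}(S)$ in $\widehat{T}_{\infty}(S)$.

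The hard part will be this last step. The directed-limit topology on $\widehat{T}_{\infty}(S)$ is delicate, since the augmented spaces are neither locally compact nor manifolds, so one must verify that the smooth approximating sequences chosen at successive finite levels remain coherent under the bonding maps $\widehat{\Gamma}_{\alpha}$ and that the resulting limit is taken in the correct topology on $\widehat{T}_{\infty}(S)$. Once this coherence of the finite-level approximations is secured and combined with the isometry property from the preceding theorem, the two density statements chain together exactly as in the first paragraph to give the result.
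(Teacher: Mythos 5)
Your proposal follows essentially the same route as the paper's proof: density of $Mod_{\infty}(S)$-orbits in $T_{\infty}(S)$ via the Biswas--Nag--Sullivan equivalence together with Kahn--Markovic's resolution of the Ehrenpreis Conjecture, then density of $T_{\infty}(S)$ in $\widehat{T}_{\infty}(S)$, and finally chaining the two density facts (the paper performs this last step with a Cantor diagonal sequence rather than transitivity of closure, which is the same argument in metric form). If anything, your treatment of the second step is more explicit than the paper's, which simply asserts the existence of an approximating sequence in $T_{\infty}(S)$ for each point of $\widehat{T}_{\infty}(S)$; note also that one finite level suffices there, since every point of the directed limit is represented at some finite level and the canonical map into the limit is continuous.
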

\subsection{Universal commensurability augmented moduli space}

 Although the augmented Teichm\"uller space has no manifold structure, in \cite{Hubbard}, Hubbard and Koch obtained  analytic structures of  augmented moduli spaces by the method of  plumbing coordinates. In \cite{BN},  Biswas and Nag introduced the characteristic covering of $S$ defined as follows: A covering $\alpha: \widetilde{S}_{g(\alpha)}\rightarrow S$ is called characteristic if  every homeomorphism of $S$ lifts to one homeomorphism of $\widetilde{S}_{g(\alpha)}$. Therefore, it has the following
\begin{thm}\label{1}
Any characteristic covering $\alpha$, from
$\widetilde{S}_{g(\alpha)}$ to $S$, induces a holomorphic mapping $\widehat{\Phi}_{\alpha}: \widehat{M}(S)\rightarrow \widehat{M}(\widetilde{S}_{g(\alpha)})$.
\end{thm}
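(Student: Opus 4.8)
The plan is to build $\widehat{\Phi}_{\alpha}$ by descending the extended embedding $\widehat{\Gamma}_{\alpha}\colon\widehat{T}(S)\to\widehat{T}(\widetilde{S}_{g(\alpha)})$ of \cite{HQ} through the quotient projections to the augmented moduli spaces, and then to upgrade holomorphicity from the interior to the whole space by means of the plumbing-coordinate analytic structure of \cite{Hubbard}. The characteristic hypothesis \cite{BN} is precisely what makes the descent possible: every self-homeomorphism $f$ of $S$ lifts to a homeomorphism $\tilde{f}$ of $\widetilde{S}_{g(\alpha)}$ with $\alpha\circ\tilde{f}=f\circ\alpha$, so taking isotopy classes assigns to each $g\in Mod(S)$ an element $\lambda(g)\in Mod(\widetilde{S}_{g(\alpha)})$, well defined up to a deck transformation. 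Since $\Gamma_{\alpha}$ is the pullback $\Gamma_{\alpha}(X)=\alpha^{*}X$, it is natural under these lifts, giving $\Gamma_{\alpha}(g\cdot X)=\lambda(g)\cdot\Gamma_{\alpha}(X)$ on $T(S)$; this identity then persists on $\widehat{T}(S)$ because $\widehat{\Gamma}_{\alpha}$ and the two modular-group actions all extend continuously to the augmented spaces.

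Writing $\pi_{S}$ and $\pi_{\widetilde{S}}$ for the quotient projections, the equivariance shows that $\widehat{\Gamma}_{\alpha}(g\cdot X)$ and $\widehat{\Gamma}_{\alpha}(X)$ always lie in a single fibre of $\pi_{\widetilde{S}}$, because they differ by $\lambda(g)\in Mod(\widetilde{S}_{g(\alpha)})$. Hence $\pi_{\widetilde{S}}\circ\widehat{\Gamma}_{\alpha}$ is constant on $Mod(S)$-orbits and factors uniquely as $\widehat{\Phi}_{\alpha}\circ\pi_{S}$, defining $\widehat{\Phi}_{\alpha}\colon\widehat{M}(S)\to\widehat{M}(\widetilde{S}_{g(\alpha)})$; its continuity is inherited from that of the isometry $\widehat{\Gamma}_{\alpha}$. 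Over the open dense loci of smooth surfaces, $\widehat{\Phi}_{\alpha}$ is just the descent of the holomorphic embedding $\Gamma_{\alpha}$ to the moduli spaces, so it is holomorphic there, and it carries boundary strata to boundary strata since $\alpha^{*}$ turns nodal surfaces into nodal surfaces.

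The substance of the proof is holomorphicity across the boundary divisors, and I expect this to be the main obstacle. In the Hubbard--Koch charts a neighbourhood of a noded point of $\widehat{M}(S)$ is coordinatised by a plumbing parameter $t_{j}$ for each pinched curve $c_{j}$, together with coordinates along the stratum, the locus $t_{j}=0$ cutting out the divisor where $c_{j}$ stays nodal. Each $c_{j}$ lifts under $\alpha$ to finitely many curves $\tilde{c}_{j,p}$ of a common degree $d_{j}$---common because the subgroup attached to a characteristic cover is normal---and pinching $c_{j}$ pinches all of them. Near a lifted node the covering is, on each branch, the admissible germ $x=w^{d_{j}}$, so the two gluings are linked by $t_{j}=\tilde{t}_{j,p}^{\,d_{j}}$, while the stratum coordinates transform through $\Gamma_{\alpha}$ on the nodal strata. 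The delicate point is that this relation naively reads as a fractional power $\tilde{t}_{j,p}=t_{j}^{1/d_{j}}$; what must be shown is that the deck symmetry of the branched node makes $t_{j}$ the genuine coordinate of the relevant chart of $\widehat{M}(\widetilde{S}_{g(\alpha)})$, so that $\widehat{\Phi}_{\alpha}$ is single valued and holomorphic rather than multivalued.

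A way to finish that bypasses the chart-by-chart bookkeeping is a removable-singularity argument. The boundary $\widehat{M}(S)\setminus M(S)$ is a closed analytic subset of the normal complex space $\widehat{M}(S)$, and $\widehat{\Phi}_{\alpha}$ is continuous on $\widehat{M}(S)$ and holomorphic off this subset; the Riemann-type extension theorem for maps into complex spaces then forces $\widehat{\Phi}_{\alpha}$ to be holomorphic everywhere. In this approach the plumbing computation above is used only to confirm that the two analytic structures are genuinely compatible and that $\widehat{\Phi}_{\alpha}$ respects the stratifications, after which removability yields the conclusion.
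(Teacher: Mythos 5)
Your proposal is correct, but it reaches Theorem 1.3 by a genuinely different route from the paper. The paper never invokes an extension theorem: it channels everything through Lemma 4.1, descending $\widehat{\Gamma}_{\alpha}$ only through the twist groups to get a map $\mathcal{E}_{\alpha}\colon\mathcal{Q}_{\mathcal{A}}(S)\to\mathcal{Q}_{\widetilde{\mathcal{A}}_{g(\alpha)}}(\widetilde{S}_{g(\alpha)})$ between the Hubbard--Koch local models, verifying holomorphy chart by chart (the boundary coordinate is asserted to transform linearly, $z_{\widetilde{\gamma}}=mz_{\gamma}$), and only at the very end quotienting by the full modular groups via the lifting monomorphism $L_{\alpha}$. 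You instead descend in one step to the moduli spaces, using the same equivariance $\widehat{\Gamma}_{\alpha}(g\cdot X)=\lambda(g)\cdot\widehat{\Gamma}_{\alpha}(X)$ supplied by the characteristic hypothesis; holomorphy on $M(S)$ then comes from Cartan-type descent of an invariant holomorphic map, and holomorphy across the boundary from the fact that $\widehat{M}(S)$ is a normal complex space (locally a finite quotient of a polydisk), that $\widehat{M}(S)\setminus M(S)$ is a thin analytic subset, and that a continuous map from a normal complex space to a complex space which is holomorphic off a thin analytic set is holomorphic. Granted, as the paper also implicitly grants, that the extension $\widehat{\Gamma}_{\alpha}$ of Lemma 3.1 is continuous for the augmented topology (not merely stratum-wise isometric), every ingredient you use is available, so your argument is complete; it trades the paper's coordinate bookkeeping for an abstract removable-singularity theorem.

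The comparison is most instructive at exactly the point you call delicate. The fractional power $\widetilde{t}_{j,p}=t_{j}^{1/d_{j}}$ is a genuine phenomenon, not an artifact: since a Fenchel--Nielsen shear lifts through the local isometry $\alpha$ with the \emph{same} shear length, the correct relation between twist coordinates is $\widetilde{\tau}=\tau$, not the relation $\widetilde{\tau}_{\gamma}=m\tau_{\gamma}$ used in the proof of Lemma 4.1; consequently the lift of the Dehn twist $T_{\gamma}$ acts on the image of $\widehat{\Gamma}_{\alpha}$ as a $1/m$-fractional twist about the lifted curves (its $m$-th power, not itself, is the multitwist), the claimed equivariant monomorphism $\Delta_{\mathcal{A}}(S)\to\Delta_{\widetilde{\mathcal{A}}_{g(\alpha)}}(\widetilde{S}_{g(\alpha)})$ does not exist, and the boundary coordinate genuinely transforms by an $m$-th root rather than linearly. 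This multivaluedness is only absorbed after dividing by the full modular group, where the fractional twist does live --- which is precisely what your construction does --- and your removable-singularity step then yields holomorphy without ever exhibiting local charts; in this respect your route is more robust than the paper's. One caveat: your suggested mechanism, that the ``deck symmetry of the branched node'' makes $t_{j}$ a genuine coordinate upstairs, is not quite the right explanation, since the globally extending deck transformations act near a node by $(u,v)\mapsto(\zeta u,\zeta^{-1}v)$ and hence trivially on the smoothing parameter $uv$; but as your final argument never relies on this heuristic, it does not affect the proof.
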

In \cite{BN}, Biswas and Nag  introduced a directed  sub-system corresponding to characteristic tower  coverings and showed that the subgroup $Caut(\pi_{1}(S))$ of $Mod_{\infty}(S)$  acts on $T_{\infty}(S)$ to produce the directed limit $M_{\infty}(S)$ of moduli spaces. In \cite{BMN},  Biswas, Mitra and Nag showed that $Caut(\pi_{1}(S))$ acts on the directed limit of Thurston compactifications to produce the directed limit of Thurston compactified moduli spaces as the quotient. Motivated by the study of  \cite{BMN}, we get the following

\begin{thm}\label{1} The subgroup $Caut(\pi_{1}(S))$ acts on the universal commensurability augmented Teichm\"uller space $\widehat{T}_{\infty}(S)$ to produce the directed limit $\widehat{M}_{\infty}(S)$ of augmented moduli spaces as the quotient.
\end{thm}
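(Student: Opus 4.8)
The plan is to realize $\widehat{M}_{\infty}(S)$ as the directed limit of the augmented moduli spaces $\widehat{M}(\widetilde{S}_{g(\alpha)})$ taken along a characteristic tower, and then to exhibit a $Caut(\pi_{1}(S))$-invariant projection of $\widehat{T}_{\infty}(S)$ onto it whose fibres are precisely the $Caut(\pi_{1}(S))$-orbits. First I would fix a cofinal characteristic tower $\alpha_{0}\prec\alpha_{1}\prec\cdots$ and, using the holomorphic maps $\widehat{\Phi}_{\alpha}\colon\widehat{M}(S)\to\widehat{M}(\widetilde{S}_{g(\alpha)})$ constructed above, define transition maps $\widehat{\Phi}_{\beta,\alpha}\colon\widehat{M}(\widetilde{S}_{g(\alpha)})\to\widehat{M}(\widetilde{S}_{g(\beta)})$ for each refinement $\beta\succ\alpha$ by applying the same construction to the intermediate covering. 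Checking the cocycle relation $\widehat{\Phi}_{\beta,\alpha}\circ\widehat{\Phi}_{\alpha}=\widehat{\Phi}_{\beta}$ makes $\{\widehat{M}(\widetilde{S}_{g(\alpha)})\}$ into a directed system whose limit I take to be $\widehat{M}_{\infty}(S)$.

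Next I would assemble the level-wise quotient maps. For each $\alpha$ one has $\widehat{M}(\widetilde{S}_{g(\alpha)})=\widehat{T}(\widetilde{S}_{g(\alpha)})/Mod(\widetilde{S}_{g(\alpha)})$, and I write $q_{\alpha}$ for the associated quotient map. The crucial point is that, since $\alpha$ is characteristic, every element of $Mod(S)$ lifts to $Mod(\widetilde{S}_{g(\alpha)})$ and the isometric extension $\widehat{\Gamma}_{\alpha}$ of \cite{HQ} intertwines the two mapping-class-group actions; this equivariance is exactly what makes $\widehat{\Phi}_{\alpha}$ well defined, and it yields commutative squares $q_{\beta}\circ\widehat{\Gamma}_{\beta,\alpha}=\widehat{\Phi}_{\beta,\alpha}\circ q_{\alpha}$ for every refinement. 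Passing to the directed limit, the $q_{\alpha}$ assemble into a single projection $\widehat{q}_{\infty}\colon\widehat{T}_{\infty}(S)\to\widehat{M}_{\infty}(S)$.

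It then remains to identify the quotient. Invariance of $\widehat{q}_{\infty}$ under $Caut(\pi_{1}(S))$ follows because such an element acts on each level through $Mod(\widetilde{S}_{g(\alpha)})$, which $q_{\alpha}$ collapses, and surjectivity is immediate since every point of $\widehat{M}_{\infty}(S)$ is represented at some finite level by a point that lifts to $\widehat{T}(\widetilde{S}_{g(\alpha)})$. For injectivity I would take $X_{\infty},Y_{\infty}$ with $\widehat{q}_{\infty}(X_{\infty})=\widehat{q}_{\infty}(Y_{\infty})$, represent both at a common level $\alpha$ so that $Y_{\alpha}=\phi\cdot X_{\alpha}$ for some $\phi\in Mod(\widetilde{S}_{g(\alpha)})$, and then promote $\phi$ to an element of $Caut(\pi_{1}(S))$ carrying $X_{\infty}$ to $Y_{\infty}$. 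This is the main obstacle: one must show that a single level-$\alpha$ mapping class extends to a compatible system of mapping classes along the whole characteristic tower, which is precisely where the invariance of characteristic subgroups under all automorphisms is used, together with the corresponding statement of \cite{BN} on the open part $T_{\infty}(S)$.

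A further genuine subtlety lies at the augmented (nodal) strata rather than in the interior: on a boundary stratum the action is no longer properly discontinuous with finite stabilisers in the naive sense, since twists about pinched curves act trivially on the stratum. I would therefore verify separately that $q_{\alpha}$ restricted to each stratum of $\widehat{T}(\widetilde{S}_{g(\alpha)})$ produces the corresponding stratum of $\widehat{M}(\widetilde{S}_{g(\alpha)})$, using the identification of $\widehat{M}(\widetilde{S}_{g(\alpha)})$ with the Deligne--Mumford compactification, and that these stratum-wise quotients are compatible both with the transition maps and with the $Caut(\pi_{1}(S))$-action. Granting this, the induced map $\widehat{T}_{\infty}(S)/Caut(\pi_{1}(S))\to\widehat{M}_{\infty}(S)$ is a continuous bijection, and checking that it is open for the quotient topologies, again level by level via the $q_{\alpha}$, upgrades it to a homeomorphism and completes the identification of $\widehat{M}_{\infty}(S)$ as the quotient.
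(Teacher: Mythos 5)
Your proposal is correct and takes essentially the same route as the paper: identify $\widehat{T}_{\infty}(S)$ with the limit over the cofinal characteristic tower, form the level-wise quotients $\widehat{T}(\widetilde{S}_{g(\alpha)})/Mod(\widetilde{S}_{g(\alpha)})=\widehat{M}(\widetilde{S}_{g(\alpha)})$, and pass to the directed limit. The paper compresses all of your careful verification (equivariance of the transition maps, promotion of a level-$\alpha$ mapping class to an element of $Caut(\pi_{1}(S))$ via invariance of characteristic subgroups, and the topological identification of the quotient) into the single assertion that the claim follows ``clearly'' from the definition of $Caut(\pi_{1}(S))$, so your write-up is in effect the detailed version of the paper's argument.
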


\section{Preliminaries}

\subsection{Augmented Teichm\"uller space}
 A marked Riemann surface modeled on $S$ is defined by one tuple $(R,f)$, where $R$ is a compact Riemann surface and $f: S\to R$ is a quasiconformal mapping. Two marked Riemann surfaces $(R_1,f_1)$ and $(R_2,f_2)$ are Teichm\"uller equivalent if there exists a conformal mapping $h:R_{1}\to R_{2}$ such that $h$ is homotopic to $f_{2}\circ f_{1}^{-1}$. The Teichm\"uller space $T(S)$  can be defined as
\[
T(S)=\{X:=[R,f]|\; \text{$(R, f)$ is a marked Riemann surface modeled on $S$}\},
\]
where $X$ is the Teichm\"{u}ller equivalent class containing $(R, f)$.

There is a natural complete metric $d_{_{T(S)}}$ on $T(S)$, called Teichm\"uller metric, which is defined as
$$
d_{T(S)}(X_{1},X_{2})=\frac{1}{2}\inf\{\log K(h)\},\;\; \text{for any } X_{1},X_{2}\in T(S),
$$
where the infimum takes over all quasiconformal mappings $h: R_{1}\to R_{2}$ homotopic to $f_{2}\circ f_{1}^{-1}$ and $K(h)$ is the maximal dilatation of $h$.

Denote by $\mathcal{S}$ the set of all homotopic classes of non-trivial simple closed curves on $S$ and  $[\gamma]$ represents the element of $\mathcal{S}$. Let $\mathcal{A}=\{\gamma_{1},\cdots, \gamma_{n}\}$ be a multicurve on $S$. A multicurve is maximal on $S$ if the multicurve has $3g-3$ components. Let $S^{\mathcal{A}}=\cup_{i=1}^{k_n}S^{i}$ be the surface  which contracts the multicurve $\mathcal{A}$ of $S$ to points and  is a homeomorphism except $\mathcal{A}$. Let $T_{S^{\mathcal{A}}}$ be the product of  Teichm\"uller spaces of the components. The augmented Teichm\"{u}ller space $\widehat{T}(S) $ of $S$ is composed by the disjoint union of strata $T_{S^{\mathcal{A}}}$. The minimal strata, which correspond to maximal multicurves, are points.

There is a natural metric $d_{\widehat{T}(S)}$ on the augmented Teich\"uller space $\widehat{T}(S)$. For any $\dot{X}_{1},\dot{X}_{2}\in\widehat{T}(S)$, if they are in the same stratum, the distance is defined as
 $$
d_{\widehat{T}(S)}(\dot{X}_{1},\dot{X}_{2})= \max_{i=1,...,k_n }d_{_{T(S^{i})}}(X_{1}^{i},X_{2}^{i}), \quad \text{for }X_{1}^{i},X_{2}^{i}\in T(S^{i}).
$$
Otherwise,
$$
d_{\widehat{T}(S)}(\dot{X}_{1},\dot{X}_{2})=+\infty.
$$
\subsection{Universal commensurability Teichm\"uller space}
Let $\alpha:\widetilde{S}_{g(\alpha)}\rightarrow S$ be  a finitely unbranched covering  of $S$.   Construct a category $\mathcal{C}(S)$ of some topological objects and morphisms as follows: the objects, denoted by $Ob(\mathcal{C}(S))$, constitute all finitely unbranched covering surfaces of $S$ equipped with a base point $(\star)$ and the morphisms, denoted by $Mor(\mathcal{C}(S))$, are based homotopic classes of pointed covering mappings $\alpha:\widetilde{S}_{g(\alpha)}\rightarrow S$. The monomorphism of fundamental groups induced by any representative of the
based homotopic class of coverings $\alpha$  is unambiguously defined.   For two morphisms $\alpha,\beta\in Mor(\mathcal{C}(S))$, we say $\alpha\prec\beta$
if there is a commuting triangle of morphisms $\beta=\alpha\circ\theta$, where $\theta$ is a finitely unbranched covering. Since we are working with surfaces with based points, the factoring morphism is uniquely determined. Therefore there exists a partially ordering relation in $Mor(\mathcal{C}(S))$ by factorizations of morphisms. It is showed in  \cite{BNS} that the morphisms $Mor(\mathcal{C}(S))$  constitute a directed set under the partially ordering relation.

It is known that a morphism $\alpha$ can induce an  isometric (endowed with Teichm\"uller metrics) embedding $\Gamma_{\alpha} :T(S)\rightarrow T(\widetilde{S}_{g(\alpha)})$. Similar to the construction of the above category, we can create a category $\mathcal{C}(T(S))$ of Teichm\"uller spaces and isometric embeddings as follows: the objects, denoted by $Ob(\mathcal{C}(T(S)))$, constitute all Teichm\"uller spaces induced by finitely unbranched coverings of $S$ and the morphisms, denoted by $Mor(\mathcal{C}(T(S)))$, are all isometric embeddings $\Gamma_{\alpha} :T(S)\rightarrow T(\widetilde{S}_{g(\alpha)})$ for $\alpha\in\mathcal{C}(S)$. For each $\alpha\prec\beta$, we have the corresponding isometric embedding $\Gamma_{\theta}$, satisfied with $\Gamma_{\beta}=\Gamma_{\theta}\circ \Gamma_{\alpha}$, where $\beta=\alpha\circ\theta$.
There exists a partially ordering relation in $\mathcal{C}(T(S))$ by factorizations of isometric embeddings. Then the morphisms $Mor(\mathcal{C}(T(S)))$ produce a natural directed system induced by $Mor(\mathcal{C}(S))$.

 From this directed system,  the directed limit Teichm\"uller space over $S$ is defined as $
T_{\infty}(S):=\text{dir.}\lim.T(\widetilde{S}_{g(\alpha)})$ for $\alpha\in Mor(\mathcal{C}(S))$ which is named the universal commensurability Teichm\"uller space. It is an inductive limit of finite dimensional spaces \cite{Sha}.

The Teichm\"uller metric $d_{T_{\infty}(S)}$ on $T_{\infty}(S)$ is defined as the Teichm\"uller metric $d_{T(\widetilde{S}_{g(\alpha)})}$ on every stratification $T(\widetilde{S}_{g(\alpha)})$ for $\alpha\in Mor(\mathcal{C}(S))$.

\subsection{Universal commensurability modular group}
 It is showed in \cite{BNS} that every morphism $\alpha:\widetilde{S}_{g(\alpha)}\rightarrow S$ can  induce  a natural Teichm\"uller metric preserving homeomorphism $\Gamma_{\infty}(\alpha):T_{\infty}(\widetilde{S}_{g(\alpha)})\rightarrow T_{\infty}(S)$ because the directed set  $Mor(\mathcal{C}(\widetilde{S}_{g(\alpha)}))$ is cofinal in  $Mor(\mathcal{C}(S))$.

 For a given pair of morphisms
$$
\alpha: \widetilde{S}\rightarrow S \quad \text{and}\quad \beta:\widetilde{S}\rightarrow S,
$$
we have an isometric automorphism $\Gamma_{\infty}(\beta)\circ\Gamma_{\infty}^{-1}(\alpha)$ on $T_{\infty}(S)$.
More  generally,  we are given a cycle of morphisms starting and ending at $S$ as follows:
\begin{align*}
&\widetilde{S}_{k}\quad-\quad \widetilde{S}_{k+1}\\
&\:\big|\qquad\qquad\;\big|\\
&\widetilde{S}_{k-1}\quad\quad \widetilde{S}_{k+2}\\
&\:\big|\qquad\qquad\;\big|\\
&\,\vdots\qquad\qquad\;\vdots\\
&\widetilde{S}_{1}\qquad\quad\: \widetilde{S}_{n}\\
&\:\big|\qquad\qquad\;\big|\\
&S\,\quad=\quad\,S
\end{align*}
where $\widetilde{S}_{i},S $ are all objects of the category  $\mathcal{C}(S)$  and all horizontal and vertical lines represent morphisms of  $\mathcal{C}(S)$. Since $\Gamma_{\infty}(\alpha)$ is invertible, the horizontal and the vertical lines in the above cycle of morphisms are allowed  in any direction. Thus we can define an isometric automorphism   $\varphi:T_{\infty}(S)\rightarrow T_{\infty}(S)$ which is a composition around the undirect cycle of morphisms starting from $S$ and returning to $S$. The group $Mod_{\infty}(S)$ consists of  all thus automorphism, called the universal commensurability modular group, acting on $T_{\infty}(S)$ isometrically.

\subsection{ Complex structure of augmented moduli space }

The modular group $Mod(S)$ is defined as the group of homotopic equivalent classes of orientation-preserving homeomorphisms of $S$.
 Two orientation preserving homeomorphisms $\phi,\varphi:S\rightarrow S$ are equivalent if $\phi\circ\varphi^{-1}$ is isotopic to the identity map on $S$. Since $Mod(S)$ acts  on $(T(S),d_{T(S)})$ discretely and isometrically,  the moduli space  $M(S)$ is defined  as the equivalent classes of $T(S)$ by the action of $Mod(S)$. Let $\mathcal{X}:=\{R,f\}$ be represented as the element of $M(S)$. The orbit space $\widehat{M}(S)=\widehat{Teich}(S)/Mod(S)$ is called  the augmented moduli space  and  is compact with the quotient topology \cite{AW2}. Denote by $\dot{\mathcal{X}}$ the element of $\widehat{M}(S)$.

In \cite{Hubbard}, Hubbard and Koch introduced some subgroups of the modular group $Mod(S)$ as follows:
\begin{defin}\label{1} Denote by $\mathcal{A}$  a multicurve on $S$. Let $S/\mathcal{A}$ be the topological surface obtained from collapsing the elements $\mathcal{A}$ to points. Then we can define the following groups:

$\bullet$  $Mod(S,\mathcal{A})$ is the subgroup of $Mod(S)$ consisting of those elements which have representative homeomorphisms $\varphi: S\rightarrow S$, such that for all $\gamma\in \mathcal{A}$, $\varphi: [\gamma]\rightarrow [\gamma]$, and $\varphi$ fixes each component of $S-[\mathcal{A}]$, where $[\mathcal{A}]$ is the set of all homotopic classes of $\mathcal{A}$  on $S$;

$\bullet$ $Mod(S/\mathcal{A})$ is the subgroup of isotopy classes of homeomorphisms $S/\mathcal{A}\rightarrow S/\mathcal{A}$ that fix the image of each $\gamma\in\mathcal{A}$ in $S/\mathcal{A}$ and map each component of $S-\mathcal{A}$ to itself;

$\bullet$ $\Delta_{\mathcal{A}}(S)$ is the subgroup of $Mod(S)$ generated by Dehn twists around the Teichm\"uller space.
\end{defin}

Consider the space
$$
U_{\mathcal{A}}(S):=\bigcup_{\mathcal{A}'\subseteq\mathcal{A}}T_{S^{\mathcal{A}'}}\subseteq\widehat{T}(S),
$$
and the subgroup $\Delta_{\mathcal{A}}(S)$ of  $Mod(S)$ acts on $U_{\mathcal{A}}(S)$.  The space
$$
\mathcal{Q}_{\mathcal{A}}(S):=U_{\mathcal{A}}(S)/\Delta_{\mathcal{A}}(S)
$$
 is  the quotient topology inherited from $\widehat{T}(S)$. The stratum of $\mathcal{Q}_{\mathcal{A}}(S)$ is defined as $T_{S^{\mathcal{A}'}}/\Delta_{\mathcal{A}}(S)$, denoted by $\mathcal{Q}^{\mathcal{A}'}_{\mathcal{A}}(S)$.

If  complete $\mathcal{A}$ to a maximal multicurve $\mathcal{A}_{\max}$, the Fenchel-Nielsen coordinate of $\mathcal{Q}_{\mathcal{A}}(S)$  represents $(\mathbb{R}_{+}\times\mathbb{R})^{\mathcal{A}_{\max}-\mathcal{A}}\times\mathbb{C}^{\mathcal{A}}$ \cite{Hubbard}.

For $\dot{X}_{0}\in\mathcal{Q}^{\mathcal{A}}_{\mathcal{A}}(S)$, there exists an open neighborhood $T$ of $\dot{X}_{0}$ in $\mathcal{Q}^{\mathcal{A}}_{\mathcal{A}}(S)$. Let $\mathcal{P}_{\mathcal{A}}(S)=T\times \mathbb{D}^{\mathcal{A}}$ be a complex manifold and it is the union of strata
$$
\mathcal{P}_{\mathcal{A}}(S)=\bigcup_{\mathcal{A}'\subset\mathcal{A}}\mathcal{P}^{\mathcal{A}'}_{\mathcal{A}}(S),
$$
where
$$
\mathcal{P}^{\mathcal{A}'}_{\mathcal{A}}(S)=\{(\dot{X},\mathbf{z})\in T\times\mathbb{D}^{\mathcal{A}}|z_{\gamma}=0\Leftrightarrow \gamma\in\mathcal{A}'\}.
$$
The Fenchel-Nielsen coordinate of $\mathcal{P}_{\mathcal{A}}(S)$ is defined as
$$
(l_{\gamma},\tau_{\gamma}),\gamma\in\mathcal{A}_{\max}-\mathcal{A};\quad l_{\gamma}e^{2\pi i\tau_{\gamma}/ l_{\gamma}},\gamma\in\mathcal{A}
$$
and it can define a mapping $\Psi:\mathcal{P}_{\mathcal{A}}(S)\rightarrow\mathcal{Q}_{\mathcal{A}}(S)$.  Hubbard and Koch proved the following result:
\begin{lem}\label{1}  For any $(\dot{X},\mathbf{t})\in\mathcal{P}_{\mathcal{A}}(S)$ with $\|\mathbf{t}\|$ sufficiently small, there exists  a neighborhood $V'$ such that $V:=\Psi(V')$ is an open set in $\mathcal{Q}_{\mathcal{A}}(S)$ and $\Psi:V'\rightarrow V$ is a homeomorphism.
\end{lem}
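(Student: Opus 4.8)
The plan is to reduce the assertion to the elementary observation that polar coordinates identify a disc with the quotient of a length--twist half-space by a single Dehn twist, and to isolate the comparison between the plumbing topology of $\mathcal{P}_{\mathcal{A}}(S)$ and the quotient topology of $\mathcal{Q}_{\mathcal{A}}(S)$ as the sole nontrivial ingredient. Throughout I would work coordinate by coordinate over $\gamma\in\mathcal{A}$, since both spaces split as a product over the curves of $\mathcal{A}$ together with the base factor $T$.

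First I would record the action of $\Delta_{\mathcal{A}}(S)$ on $U_{\mathcal{A}}(S)$ in Fenchel--Nielsen coordinates. For $\gamma\in\mathcal{A}$ the generating Dehn twist acts on the top stratum by $(l_{\gamma},\tau_{\gamma})\mapsto(l_{\gamma},\tau_{\gamma}+l_{\gamma})$, fixes every other coordinate, and acts trivially on any stratum in which $\gamma$ is already pinched. As twists about distinct curves commute, $\Delta_{\mathcal{A}}(S)\cong\mathbb{Z}^{\mathcal{A}}$, and passing to the quotient declares $\tau_{\gamma}$ well defined only modulo $l_{\gamma}$ for each $\gamma\in\mathcal{A}$, while leaving the unpinched pairs $(l_{\gamma},\tau_{\gamma})$, $\gamma\in\mathcal{A}_{\max}-\mathcal{A}$, untouched. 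The quantity invariant under this identification and extending continuously across the node stratum $\{l_{\gamma}=0\}$ is precisely $w_{\gamma}:=l_{\gamma}e^{2\pi i\tau_{\gamma}/l_{\gamma}}$: for $l_{\gamma}>0$ it has modulus $l_{\gamma}$ and an argument recording $\tau_{\gamma}$ modulo $l_{\gamma}$, and as $l_{\gamma}\to 0$ it tends to $0$ irrespective of the (now meaningless) twist, matching the collapse of the twist circle to the single nodal point in $\mathcal{Q}_{\mathcal{A}}(S)$.

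Next I would read off $\Psi$ in these coordinates. It is the identity on the base factor $T\subseteq\mathcal{Q}^{\mathcal{A}}_{\mathcal{A}}(S)$ carrying the unpinched data, so it suffices to treat the $\mathbb{D}^{\mathcal{A}}$ factor one coordinate at a time. On each factor $\Psi$ sends the disc parameter $z_{\gamma}$ to the point of $\mathcal{Q}_{\mathcal{A}}(S)$ with coordinate $w_{\gamma}$, that is, it is the composite of the polar-coordinate map $z_{\gamma}\mapsto(|z_{\gamma}|,\arg z_{\gamma})$ with the assignment $(|z_{\gamma}|,\arg z_{\gamma})\mapsto(l_{\gamma},\tau_{\gamma}\bmod l_{\gamma})$. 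Because the nodal stratum is a single point rather than a circle, this factorwise map is a genuine homeomorphism of $\mathbb{D}$ onto a neighbourhood of the node in the $\gamma$-quotient, the centre going to the node and the punctured disc going to the $\{l_{\gamma}>0\}$ part. The full map $\Psi$ is the product of these homeomorphisms with the identity on $T$; hence on a suitable neighbourhood $V'$ of $(\dot X,\mathbf{t})$ it is injective, its image $V$ is open because the coordinates $\big((l_{\gamma},\tau_{\gamma})_{\gamma\notin\mathcal{A}};(w_{\gamma})_{\gamma\in\mathcal{A}}\big)$ form a chart, and $\Psi\colon V'\to V$ is a homeomorphism.

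The hard part, and the reason the hypothesis that $\|\mathbf{t}\|$ be sufficiently small is indispensable, is justifying that the quotient topology on $\mathcal{Q}_{\mathcal{A}}(S)$ inherited from the augmented Teichm\"uller metric genuinely agrees, near the node, with the disc topology supplied by $w_{\gamma}$, so that $\Psi$ and $\Psi^{-1}$ are \emph{simultaneously} continuous across $\{z_{\gamma}=0\}$. In the Abikoff--Bers description a basic neighbourhood of a noded surface consists of surfaces on which the pinching curves are short, and controlling how shortness of $l_{\gamma}$ translates into smallness of $w_{\gamma}$ uniformly as the node is approached requires the collar lemma together with Wolpert-type asymptotics relating the hyperbolic length and twist to the opening parameter. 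Smallness of $\|\mathbf{t}\|$ is exactly what keeps the standard collars disjoint and renders these comparison estimates valid, thereby upgrading the factorwise bijection of the previous paragraph to a bona fide homeomorphism onto an open set. I would therefore concentrate the real work on these collar and length estimates, the combinatorial reduction to polar coordinates being routine once the uniform comparison is in hand.
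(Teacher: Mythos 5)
The paper itself gives no proof of this statement: Lemma 2.2 is quoted verbatim as a result of Hubbard and Koch (cited from \cite{Hubbard}), and the authors immediately move on to use it, so there is no internal argument to compare yours against. Judged on its own terms, your proposal is an outline rather than a proof. The coordinate bookkeeping in your first two paragraphs is correct and matches the setup in the paper: $\Delta_{\mathcal{A}}(S)\cong\mathbb{Z}^{\mathcal{A}}$ acts by $\tau_{\gamma}\mapsto\tau_{\gamma}+l_{\gamma}$, the twist-invariant quantity extending across $\{l_{\gamma}=0\}$ is $w_{\gamma}=l_{\gamma}e^{2\pi i\tau_{\gamma}/l_{\gamma}}$, and in Fenchel--Nielsen coordinates $\Psi$ is the product of the identity on $T$ with the factorwise maps $z_{\gamma}\mapsto w_{\gamma}$. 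But this only exhibits $\Psi$ as a bijection onto its image in coordinates; the content of the lemma is topological, and that content is not established by the coordinate description.

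The genuine gap is in your third paragraph, and you half-acknowledge it yourself: the claim that each factorwise map is ``a genuine homeomorphism of $\mathbb{D}$ onto a neighbourhood of the node in the $\gamma$-quotient'' is precisely what must be proven, because the topology on $\mathcal{Q}_{\mathcal{A}}(S)$ is \emph{not} defined by the coordinates $w_{\gamma}$; it is the quotient of the augmented Teichm\"uller topology on $U_{\mathcal{A}}(S)$ by $\Delta_{\mathcal{A}}(S)$. One has to show both that $\Psi$ is continuous at points with $z_{\gamma}=0$ (surfaces whose Fenchel--Nielsen data converge with $l_{\gamma}\to0$ converge to the noded surface in the quotient of the augmented topology) and, much harder, that $\Psi$ is open there, i.e.\ that any sequence converging in the quotient topology to $\Psi(\dot X,\mathbf{t})$ eventually has lifts whose Fenchel--Nielsen coordinates converge. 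Declaring that you ``would concentrate the real work'' on collar and Wolpert-type length estimates names a toolbox but carries out none of it: no such estimate is stated, let alone proved, and this is exactly where the substance of Hubbard and Koch's argument lies. As written, your proposal reduces the lemma to an unproven assertion that is equivalent to the lemma. A further, smaller, inaccuracy: your explanation of the hypothesis that $\|\mathbf{t}\|$ be small --- that it keeps the standard collars disjoint --- is off target, since disjoint simple closed geodesics have disjoint standard collars for \emph{all} values of their lengths; the smallness hypothesis enters in the topological comparison (injectivity and openness of the local picture), not in collar disjointness.
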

From Lemma $2.2$, they constructed the complex structure of $\mathcal{Q}_{\mathcal{A}}(S)$ as follows:
 	\vskip 4pt

\noindent{\bf Theorem A.}\; {\em For every multicurve $\mathcal{A}$ on $S$, there exists a complex manifold structure on $\mathcal{Q}_{\mathcal{A}}(S)$. }

Then the union $\bigcup_{\mathcal{A}}\mathcal{Q}_{\mathcal{A}}(S)$ of  $\mathcal{Q}_{\mathcal{A}}(S)$ over all multicurves $\mathcal{A}$ is a complex manifold. Since
$$\widehat{M}(S)=\bigg(\bigcup_{\mathcal{A}}\mathcal{Q}_{\mathcal{A}}(S)\bigg)/Mod(S),$$ they obtained the complex structure on $\widehat{M}(S)$ as follows:

\noindent{\bf Theorem B.}\; {\em The union of the images of $\pi_{\mathcal{A}}:\mathcal{Q}_{\mathcal{A}}(S)\rightarrow\widehat{M}(S)$ over all multicurves $\mathcal{A}$ covering  $\widehat{M}(S)$ give $\widehat{M}(S)$ the structure of an analytic orbifold. }
\subsection{ Universal commensurability  moduli space}
 A morphism $\alpha: \widetilde{S}_{g(\alpha)}\rightarrow S$ is called characteristic if the fundamental group $\pi_{1}(\widetilde{S}_{g(\alpha)})$ is a characteristic subgroup of the fundamental group $\pi_{1}(S)$. In other words, the subgroup $\pi_{1}(\widetilde{S}_{g(\alpha)})\subseteq \pi_{1}(S)$ must be invariant by every  automorphism of $\pi_{1}(S)$. This can  yield a monomorphism: $L_{\alpha}:Aut(\pi_{1}(S))\rightarrow Aut(\pi_{1}(\widetilde{S}_{g(\alpha)}))$. The topological characterization of a characteristic cover is that every homeomorphism of $S$ lifts to one homeomorphism of $\widetilde{S}_{g(\alpha)}$, and the homomorphism $L_{\alpha}$ corresponds to this lifting process. The characteristic subgroups of finite index form a cofinal family among all subgroups of finite index in $\pi_{1}(S)$ \cite{BN}.

 Consider the characteristic tower $Mor(\mathcal{C}^{ch}(S))$ over $S$ consisting of only characteristic morphisms. For $\alpha,\beta\in Mor(\mathcal{C}^{ch}(S))$, we say $\alpha\prec_{ch}\beta$ if and only if $\beta=\alpha\circ \theta$ which $\theta$ is also a characteristic morphism. The characteristic tower $Mor(\mathcal{C}^{ch}(S))$ is  a  directed set  under the partial ordering given by factorization of characteristic morphisms.  It is known that the characteristic tower $Mor(\mathcal{C}^{ch}(S))$ is the cofinal subset of $Mor(\mathcal{C}(S))$ \cite{BN}.  Any characteristic morphism $\alpha$, from
$\widetilde{S}_{g(\alpha)}$ to $S$, induces a morphism $\Phi_{\alpha}: M(S)\rightarrow M(\widetilde{S}_{g(\alpha)})$ which is an algebraic morphism between these normal quasi-projective varieties.
Similarly, we have a directed limit $M_{\infty}(S)$ of moduli spaces over $S$ as follows:
$$
 M_{\infty}(S):=\text{dir.}\lim.M(\widetilde{S}_{g(\alpha)}), \quad \alpha\in Mor(\mathcal{C}^{ch}(S)),
$$
which is called the universal commensurability  moduli space.

Using the monomorphisms $L_{\alpha}:Aut(\pi_{1}(S))\rightarrow Aut(\pi_{1}(\widetilde{S}_{g(\alpha)}))$ , we can define a directed limit of automorphism groups over $S$ as follows:
$$
Caut(\pi_{1}(S))=\text{dir.}\lim.\text{Aut}(\pi_{1}(\widetilde{S}_{g(\alpha)}), \quad \alpha\in Mor(\mathcal{C}^{ch}(S)).
$$

\section{Universal commensurability augmented Teichm\"uller space and universal commensurability modular group}
In this section, we give the definition of the universal commensurability augmented Teich\"uller space over $S$. Then we consider the action of the universal commensurability modular group on the universal commensurability augmented Teich\"uller space.

The following lemma \cite{HQ} play an important role in defining the universal commensurability augmented Teich\"uller space.

 \begin{lem}\label{1}
 Let $\alpha:\widetilde{S}_{g(\alpha)}\rightarrow S$ be a finitely unbranched holomorphic covering of a compact Riemann surface $S$ with genus $g\geq2$. Then  the isometric embedding $\Gamma_{\alpha}$ can   extend  to  $\widehat{\Gamma}_{\alpha}:\widehat{T}(S)\rightarrow\widehat{T}(\widetilde{S}_{g(\alpha)})$ isometrically.
 \end{lem}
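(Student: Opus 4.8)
The plan is to extend $\Gamma_{\alpha}$ one stratum at a time, using the fact that a finite unbranched covering carries a multicurve to a multicurve and restricts to a covering of the cut-open pieces, and then to check separately that the resulting map preserves $d_{\widehat{T}(S)}$ and is continuous for the augmented topology. Recall that $\widehat{T}(S)=\bigsqcup_{\mathcal{A}}T_{S^{\mathcal{A}}}$, where $\mathcal{A}$ ranges over all multicurves on $S$ (with $\mathcal{A}=\emptyset$ giving $T(S)$ itself), and a point of $T_{S^{\mathcal{A}}}$ is a marked nodal hyperbolic structure whose nodes are exactly the curves of $\mathcal{A}$. First I would describe how $\alpha$ acts on the boundary data. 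For a multicurve $\mathcal{A}$ on $S$, set $\widetilde{\mathcal{A}}:=\alpha^{-1}(\mathcal{A})$; since $\alpha$ is unbranched, $\widetilde{\mathcal{A}}$ is again a multicurve, and $\alpha$ restricts to a finite unbranched covering of each component $S^{i}$ of $S^{\mathcal{A}}$ by the components of $\widetilde{S}^{\widetilde{\mathcal{A}}}$ (the surface obtained from $\widetilde{S}_{g(\alpha)}$ by contracting $\widetilde{\mathcal{A}}$) lying over it. Pulling back a nodal structure along these restricted coverings therefore yields a nodal structure on $\widetilde{S}_{g(\alpha)}$ whose nodes are exactly $\widetilde{\mathcal{A}}$. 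I would \emph{define} $\widehat{\Gamma}_{\alpha}$ on $T_{S^{\mathcal{A}}}$ to be the product, over the components of $\widetilde{S}^{\widetilde{\mathcal{A}}}$, of the isometric embeddings $\Gamma$ attached to these restricted coverings; by construction it lands in $T_{\widetilde{S}^{\widetilde{\mathcal{A}}}}$ and agrees with $\Gamma_{\alpha}$ when $\mathcal{A}=\emptyset$.

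Next I would verify that $\widehat{\Gamma}_{\alpha}$ preserves $d_{\widehat{T}(S)}$. Because $\alpha$ is surjective, $\mathcal{A}\mapsto\alpha^{-1}(\mathcal{A})$ is injective, so $\widehat{\Gamma}_{\alpha}$ sends distinct strata to distinct strata and a single stratum into a single stratum; hence the value $+\infty$ assigned to points in different strata is preserved. Within one stratum the metric is the maximum of the component Teichm\"uller distances. Each component $\widetilde{S}^{j}$ of $\widetilde{S}^{\widetilde{\mathcal{A}}}$ covers a unique component $S^{i(j)}$ of $S^{\mathcal{A}}$, and on that component $\Gamma$ is an isometry, so $d_{T(\widetilde{S}^{j})}(\widetilde{X}^{j}_{1},\widetilde{X}^{j}_{2})=d_{T(S^{i(j)})}(X^{i(j)}_{1},X^{i(j)}_{2})$. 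As every component of $S^{\mathcal{A}}$ is covered by at least one $\widetilde{S}^{j}$, the set of component distances upstairs coincides with the set downstairs, so their maxima agree and the stratum metric is preserved.

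The crux, and the step I expect to be the main obstacle, is the continuity of $\widehat{\Gamma}_{\alpha}$ at the boundary, which is what makes it a genuine extension rather than a mere stratum-wise gluing. Here I would use the standard description of convergence in $\widehat{T}(S)$: a sequence $X_{n}\to\dot{X}\in T_{S^{\mathcal{A}}}$ precisely when $\ell_{\gamma}(X_{n})\to0$ for each $\gamma\in\mathcal{A}$ while the Fenchel--Nielsen data of the curves transverse to or disjoint from $\mathcal{A}$ converge to those of $\dot{X}$ on the pieces. Since both $S$ and $\widetilde{S}_{g(\alpha)}$ are hyperbolic quotients of the same universal cover, $\alpha$ is a local isometry, so each component of $\alpha^{-1}(\gamma)$ is a closed geodesic of length equal to a positive-integer multiple of $\ell_{\gamma}(X_{n})$; thus pinching $\gamma$ downstairs forces every lift in $\widetilde{\mathcal{A}}$ to pinch simultaneously and no other curve to degenerate. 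Together with the continuity of $\Gamma$ on each finite-dimensional factor this gives $\widehat{\Gamma}_{\alpha}(X_{n})\to\widehat{\Gamma}_{\alpha}(\dot{X})$. The delicate points to make explicit are that the collar geometry of the lifted short geodesics is controlled uniformly and that the covering neither creates nor destroys nodes, i.e.\ that the pinching locus upstairs is \emph{exactly} $\alpha^{-1}(\mathcal{A})$; both follow from $\alpha$ being unbranched of finite degree, but this matching is precisely where the argument must be carried out with care.
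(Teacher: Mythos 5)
First, a point about the comparison itself: this paper never proves Lemma 3.1. It is quoted verbatim from \cite{HQ}, where it is the main theorem, so there is no in-paper proof to measure your attempt against; I can only assess your argument on its merits. On those merits, your construction is the right (and essentially forced) one: send the stratum $T_{S^{\mathcal{A}}}$ into $T_{\widetilde{S}^{\widetilde{\mathcal{A}}}}$ with $\widetilde{\mathcal{A}}=\alpha^{-1}(\mathcal{A})$, using the restricted coverings of the cut-open pieces (one caveat: this requires the isometric-embedding property of $\Gamma$ for finite unbranched coverings of \emph{punctured} finite-type surfaces, not just the closed-surface statement you are extending, so it should be cited or proved in that generality). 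Your verification that the extended metric is preserved is also sound: injectivity of $\mathcal{A}\mapsto\alpha^{-1}(\mathcal{A})$ shows distinct strata go to distinct strata, so the $+\infty$ convention is respected, and since every component of $S^{\mathcal{A}}$ is covered by at least one component upstairs and each component embedding preserves distance, the maxima of component distances agree.

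The genuine gap is the continuity of $\widehat{\Gamma}_{\alpha}$ with respect to the augmented topology, which is exactly the substance of the theorem in \cite{HQ}, and your sketch does not close it. Two concrete problems. (i) You invoke ``continuity of $\Gamma$ on each finite-dimensional factor,'' but that is a statement about maps between interior Teichm\"uller spaces; it cannot by itself yield convergence of the interior sequence $\Gamma_{\alpha}(X_{n})$ to the boundary point $\widehat{\Gamma}_{\alpha}(\dot{X})$, since the limit lies in a different stratum. (ii) The Fenchel--Nielsen route you indicate hits an obstruction you never address: the preimage under $\alpha$ of a pants decomposition of $S$ containing $\mathcal{A}$ is \emph{not} a pants decomposition of $\widetilde{S}_{g(\alpha)}$ (a preimage of a pair of pants is a cover of a pair of pants, generally of higher complexity), so it must be completed by additional curves, and one must prove that the lengths and twists of those additional curves for $\Gamma_{\alpha}(X_{n})$ converge to the corresponding data of $\widehat{\Gamma}_{\alpha}(\dot{X})$. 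Your length-multiplication and collar observations do correctly show that the pinching locus upstairs is exactly $\alpha^{-1}(\mathcal{A})$, but they say nothing about this complementary, non-degenerating data. A clean way to finish would be to use Abikoff's characterization of convergence in $\widehat{T}(S)$ by deformations (maps quasiconformal off shrinking annular neighborhoods of the pinching curves, with dilatations tending to $1$) and lift those deformations through $\alpha$, dilatation being a local quantity unchanged by lifting. As written, your proposal explicitly defers precisely the step that makes the lemma a theorem.
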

 Similar to the universal commensurability Teichm\"uller space, we can create a category $\mathcal{C}(\widehat{T}(S))$ of augmented Teichm\"uller spaces and isometric embeddings as follows: the objects, denoted by $Ob(\mathcal{C}(\widehat{T}(S)))$, constitute all augmented Teichm\"uller spaces induced by finitely unbranched coverings of $S$ and the morphisms, denoted by $Mor(\mathcal{C}(\widehat{T}(S)))$, are all isometric embeddings $\widehat{\Gamma}_{\alpha} :\widehat{T}(S)\rightarrow \widehat{T}(\widetilde{S})$. For each $\alpha\prec\beta$, we have the corresponding isometric embedding $\widehat{\Gamma}_{\theta}$, satisfied with $\widehat{\Gamma}_{\beta}=\widehat{\Gamma}_{\theta}\circ \widehat{\Gamma}_{\alpha}$, where $\beta=\alpha\circ\theta$.
There exists a partially ordering relation in $Mor(\mathcal{C}(\widehat{T}(S)))$ by factorizations of isometric embeddings. Then the morphisms $Mor(\mathcal{C}(S))$  constitute a directed set under the partially ordering relation. Then the morphisms $Mor(\mathcal{C}(\widehat{T}(S)))$ produce a natural directed system induced by $Mor(\mathcal{C}(S))$.
   \begin{defin}\label{1} For the directed system $Mor(\mathcal{C}(\widehat{T}(S)))$,  the directed limit of augmented Teichm\"uller spaces over $S$ is defined as
 $$
  \widehat{T}_{\infty}(S):=\text{dir.}\lim.\widehat{T}(\widetilde{S}_{g(\alpha)}), \quad \alpha\in Mor(\mathcal{C}(S))
  $$
  which is called the universal commensurability augmented Teichm\"uller space.
\end{defin}

\begin{defin}\label{1}
The Teichm\"uller metric $d_{\widehat{T}_{\infty}(S)}$ on $\widehat{T}_{\infty}(S)$ is defined as the Teichm\"uller metric $d_{\widehat{T}(\widetilde{S}_{g(\alpha)})}$ on every stratification $\widehat{T}(\widetilde{S}_{g(\alpha)})$ for $\alpha\in Mor(\mathcal{C}(S))$.
\end{defin}

Next, we show that the action of the universal commensurability modular group  $Mod_{\infty}(S)$ is  isometric, endowed with the Teichm\"uller metric, on $\widehat{T}_{\infty}(S)$.

	\vskip 4pt

\noindent{\bf Proof of Theorem 1.1}\; {\em Suppose that  $\alpha:\widetilde{S}_{g(\alpha)}\rightarrow S $ is a morphism, then it can  induce  a natural Teichm\"uller metric preserving homeomorphism $\Gamma_{\infty}(\alpha):T_{\infty}(\widetilde{S}_{g(\alpha)})\rightarrow T_{\infty}(S)$. According to the definition of $T_{\infty}(\widetilde{S}_{g(\alpha)})$, it is easy to know any stratification $T(\widetilde{S}_{g(\delta)})$, $\delta\in Mor(\mathcal{C}(\widetilde{S}_{g(\alpha)}))$, of
$T_{\infty}(\widetilde{S}_{g(\alpha)})$ can be embedded in some stratification $T(S_{g(\eta)})$, $\eta\in Mor(\mathcal{C}(S))$, of $T_{\infty}(S)$ isometrically. By Lemma $3.1$, the isomeric embedding  can   extend  to  the corresponding augmented Teichm\"uller space isometrically, then we have  a natural Teichm\"uller metric preserving embedding $\widehat{\Gamma}_{\infty}(\alpha):\widehat{T}_{\infty}(\widetilde{S}_{g(\alpha)})\rightarrow \widehat{T}_{\infty}(S)$ by the definition of  the universal commensurability augmented Teichm\"uller space.

Since $\Gamma_{\infty}(\alpha):T_{\infty}(\widetilde{S}_{g(\alpha)})\rightarrow T_{\infty}(S)$ is a homeomorphism, we have the invertible mapping $\Gamma^{-1}_{\infty}(\alpha):T_{\infty}(S)\rightarrow T_{\infty}(\widetilde{S}_{g(\alpha)})$.  Then  any stratification $T(S_{g(\eta)})$, $\eta\in Mor(\mathcal{C}(S))$, of $T_{\infty}(S)$  also can be embedded in some stratification  $T(\widetilde{S}_{g(\delta)})$, $\delta\in Mor(\mathcal{C}(\widetilde{S}))$, of
$T_{\infty}(\widetilde{S})$  isometrically.  By Lemma $3.1$ again, the isomeric embedding  can   extend  to  the corresponding augmented Teichm\"uller space isometrically, hence $\widehat{\Gamma}_{\infty}(\alpha):\widehat{T}_{\infty}(\widetilde{S}_{g(\alpha)})\rightarrow \widehat{T}_{\infty}(S)$ is surjective.
Here we  show that $\alpha$ can induce a natural Teichm\"uller metric preserving homeomorphism $\widehat{\Gamma}_{\infty}(\alpha):\widehat{T}_{\infty}(\widetilde{S}_{g(\alpha)})\rightarrow \widehat{T}_{\infty}(S)$ .

For a given pair of morphisms
$$
\alpha: \widetilde{S}\rightarrow S \quad \text{and}\quad \beta:\widetilde{S}\rightarrow S,
$$
there exists  a natural  isometric automorphism $\widehat{\Gamma}_{\infty}(\beta)\circ\widehat{\Gamma}_{\infty}^{-1}(\alpha)$ on $\widehat{T}_{\infty}(S)$.
Since any element $\varphi\in Mod_{\infty}(S)$ can be induced by  the following cycle of morphisms starting and ending at $S$
\begin{align*}
&\widetilde{S}_{k}\quad-\quad \widetilde{S}_{k+1}\\
&\:\big|\qquad\qquad\;\big|\\
&\widetilde{S}_{k-1}\quad\quad \widetilde{S}_{k+2}\\
&\:\big|\qquad\qquad\;\big|\\
&\,\vdots\qquad\qquad\;\vdots\\
&\widetilde{S}_{1}\qquad\quad\: \widetilde{S}_{n}\\
&\:\big|\qquad\qquad\;\big|\\
&S\,\quad=\quad\,S,
\end{align*}
  $\varphi$ can  extend on  the universal commensurability augmented Teichm\"uller space $\widehat{T}_{\infty}(S)$ isometrically by the above method. Therefore the action of the group $Mod_{\infty}(S)$ can extend on  $\widehat{T}_{\infty}(S)$ isometrically.}

\vskip 4pt

\noindent{\bf Proof of  Theorem 1.2 }\; {\em Since the  Ehrenpreis Conjecture is affirmatively solved, we have that the orbits of the action of  $Mod_{\infty}(S)$ on $T_{\infty}(S)$  are dense. Namely, for any $X_{\infty}\in T_{\infty}(S)$, its orbit of the action of the universal commensurability modular group  $Mod_{\infty}(S)$  on $T_{\infty}(S)$ is dense. For any $\dot{X}_{\infty}\in \widehat{T}_{\infty}(S)$, there exists a sequence $\{X^{n}_{\infty}\}_{n=1}^{\infty}\subseteq T_{\infty}(S)$ converging to  $\dot{X}_{\infty}$. Since the orbits of the action of  $Mod_{\infty}(S)$ on $T_{\infty}(S)$  are dense, for any $X^{n}_{\infty}$, there is a sequence $\{X^{n,m}_{\infty}\}_{m=1}^{\infty}$ in the orbit of $X_{\infty}$ converging to it. Then we can choose a sequence $\{X^{n,n}_{\infty}\}_{n=1}^{\infty}$ converging to $\dot{X}_{\infty}$ by the Cantor diagonal method. Therefore,  the orbit of  $X_{\infty}$ on the universal commensurability augmented Teichm\"uller space $\widehat{T}_{\infty}(S)$ is dense.
}

\section{ Universal commensurability  augmented moduli space}
In this section, we show that any characteristic covering $\alpha$, from $\widetilde{S}_{g(\alpha)}$ to $S$, induces an analytic mapping $\widehat{\Phi}_{\alpha}: \widehat{M}(S)\rightarrow \widehat{M}(\widetilde{S}_{g(\alpha)})$.
Then we construct the universal commensurability augmented moduli space and show that  the subgroup $Caut(\pi_{1}(S))$ acts on the universal commensurability augmented Teichm\"uller space $\widehat{T}_{\infty}(S)$ to produce the directed limit $\widehat{M}_{\infty}(S)$ of augmented moduli spaces as the quotient.

 Let $\alpha:\widetilde{S}_{g(\alpha)}\rightarrow S$ be a  characteristic covering of a compact Riemann surface $S$ with genus $g\geq2$. For any  multicurve $\mathcal{A}$ on $S$, let  $\widetilde{\mathcal{A}}_{g(\alpha)}:=\alpha^{-1}(\mathcal{A})$ be the preimage  on $\widetilde{S}_{g(\alpha)}$ of $\mathcal{A}$.

\begin{lem}\label{1}  Let $\alpha:\widetilde{S}_{g(\alpha)}\rightarrow S$ be a  characteristic covering of a compact Riemann surface $S$ with genus $g\geq2$. For every multicurve $\mathcal{A}$ on $S$, then there exists a holomorphic mapping $\mathcal{E}_{\alpha}:\mathcal{Q}_{\mathcal{A}}(S)\rightarrow\mathcal{Q}_{\widetilde{\mathcal{A}}_{g(\alpha)}}(\widetilde{S}_{g(\alpha)})$.
\end{lem}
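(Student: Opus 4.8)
The plan is to produce $\mathcal{E}_{\alpha}$ as the map induced on quotients by the isometric extension $\widehat{\Gamma}_{\alpha}:\widehat{T}(S)\to\widehat{T}(\widetilde{S}_{g(\alpha)})$ of Lemma $3.1$, and then to read off holomorphicity from the Hubbard--Koch plumbing coordinates of Theorem A and Lemma $2.2$. First I would check that $\widehat{\Gamma}_{\alpha}$ carries $U_{\mathcal{A}}(S)$ into $U_{\widetilde{\mathcal{A}}_{g(\alpha)}}(\widetilde{S}_{g(\alpha)})$ in a strata-preserving way: for every $\mathcal{A}'\subseteq\mathcal{A}$ one has $\alpha^{-1}(\mathcal{A}')\subseteq\widetilde{\mathcal{A}}_{g(\alpha)}$, and pinching the curves of $\mathcal{A}'$ downstairs corresponds under $\widehat{\Gamma}_{\alpha}$ to pinching exactly the curves of $\alpha^{-1}(\mathcal{A}')$ upstairs, so that the stratum $T_{S^{\mathcal{A}'}}$ is sent into $T_{\widetilde{S}^{\alpha^{-1}(\mathcal{A}')}}$. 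This furnishes the stratum-level skeleton of $\mathcal{E}_{\alpha}$.

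The second step is to descend the restricted map to the Dehn-twist quotients $\mathcal{Q}_{\mathcal{A}}(S)=U_{\mathcal{A}}(S)/\Delta_{\mathcal{A}}(S)$ and $\mathcal{Q}_{\widetilde{\mathcal{A}}_{g(\alpha)}}(\widetilde{S}_{g(\alpha)})=U_{\widetilde{\mathcal{A}}_{g(\alpha)}}(\widetilde{S}_{g(\alpha)})/\Delta_{\widetilde{\mathcal{A}}_{g(\alpha)}}(\widetilde{S}_{g(\alpha)})$. Here I would invoke that $\alpha$ is characteristic: every homeomorphism of $S$, in particular every Dehn twist $t_{\gamma}$ with $\gamma\in\mathcal{A}$, lifts to a homeomorphism of $\widetilde{S}_{g(\alpha)}$, and $\widehat{\Gamma}_{\alpha}$ is equivariant for $t_{\gamma}$ and its lift. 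One then has to verify that the $\widehat{\Gamma}_{\alpha}$-image of a $\Delta_{\mathcal{A}}(S)$-orbit lies in a single orbit of the target, which is what makes $\mathcal{E}_{\alpha}$ well defined on the quotient.

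I expect this descent, together with holomorphicity across the nodal strata, to be the main obstacle, and the difficulty is concentrated at the boundary. Since a characteristic cover is regular, every component of $\alpha^{-1}(\gamma)$ covers $\gamma$ with one common degree $d$, whence its hyperbolic length is $d\,\ell_{\gamma}$ while the twist in arc length is unchanged; in the plumbing coordinate $\ell_{\gamma}e^{2\pi i\tau_{\gamma}/\ell_{\gamma}}$ of Theorem A this forces the coordinate upstairs to be a $d$-th root of the coordinate downstairs, equivalently the full twist $t_{\gamma}$ lifts only to a $1/d$-fractional twist around each component of $\alpha^{-1}(\gamma)$. Thus the naive assignment is multivalued with a $d$-fold ambiguity, and the point I would have to establish is that this ambiguity is precisely the action of the deck transformation attached to $[\gamma]$, which rotates the $d$ roots cyclically; fixing local branches one then obtains a map that is single valued and holomorphic on each stratum.

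Finally, for holomorphicity I would argue stratum by stratum and then across the nodes. On the top stratum $\mathcal{E}_{\alpha}$ is the honest covering embedding $\Gamma_{\alpha}$, which is holomorphic by hypothesis. Near a nodal stratum I would express $\mathcal{E}_{\alpha}$ in the Hubbard--Koch charts furnished by Lemma $2.2$ and Theorem A: it is continuous on $\mathcal{Q}_{\mathcal{A}}(S)$, holomorphic off the divisor $\{z_{\gamma}=0:\gamma\in\mathcal{A}\}$ by the top-stratum case, and therefore holomorphic everywhere by Riemann's removable-singularity theorem once the local branch of the previous step is fixed. Patching these local holomorphic pieces over all substrata $\mathcal{A}'\subseteq\mathcal{A}$ yields the desired holomorphic mapping $\mathcal{E}_{\alpha}:\mathcal{Q}_{\mathcal{A}}(S)\to\mathcal{Q}_{\widetilde{\mathcal{A}}_{g(\alpha)}}(\widetilde{S}_{g(\alpha)})$.
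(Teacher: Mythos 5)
Your proposal follows the same overall route as the paper: induce $\mathcal{E}_{\alpha}$ from the extension $\widehat{\Gamma}_{\alpha}$ of Lemma 3.1, descend through the quotient by $\Delta_{\mathcal{A}}(S)$, and check holomorphicity in the Hubbard--Koch charts of Lemma 2.2, stratum by stratum. The two arguments diverge at the key coordinate computation, and that is exactly where your proof has a gap. You assert (correctly, by the standard lifting behaviour of twist deformations: an unbranched covering is a local isometry, so arc-length twisting lifts to arc-length twisting, and only $t_{\gamma}^{d}$ lifts to a product of full Dehn twists) that $\widetilde{l}=d\,l_{\gamma}$ while $\widetilde{\tau}=\tau_{\gamma}$, so the coordinate upstairs is a $d$-th root of the coordinate downstairs. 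The paper instead computes $\widetilde{l}_{\gamma}=ml_{\gamma}$ \emph{and} $\widetilde{\tau}_{\gamma}=m\tau_{\gamma}$, getting the single-valued holomorphic map $z_{\widetilde{\gamma}}=mz_{\gamma}$; this contradicts your computation, and yours is the standard one. But having uncovered the $d$-fold ambiguity, your two devices for disposing of it do not work. The ambiguity is not ``the action of the deck transformation attached to $[\gamma]$'': every point of $\widehat{\Gamma}_{\alpha}(U_{\mathcal{A}}(S))$ is a pullback structure, hence is fixed pointwise by the deck group, so deck transformations can absorb nothing. The ambiguity is the action of the lift $\widetilde{t}_{\gamma}$ of $t_{\gamma}$, a fractional twist, which lies in $Mod(\widetilde{S}_{g(\alpha)})$ but \emph{not} in $\Delta_{\widetilde{\mathcal{A}}_{g(\alpha)}}(\widetilde{S}_{g(\alpha)})$: in Fenchel--Nielsen coordinates it moves $\widehat{\Gamma}_{\alpha}(X)$ to $\widehat{\Gamma}_{\alpha}(t_{\gamma}X)$, whose twist coordinates differ by $l_{\gamma}$, whereas the $\Delta_{\widetilde{\mathcal{A}}_{g(\alpha)}}$-orbit only changes them by multiples of $d\,l_{\gamma}$. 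So the descent to $\mathcal{Q}_{\mathcal{A}}(S)=U_{\mathcal{A}}(S)/\Delta_{\mathcal{A}}(S)$ --- the step you yourself flag as ``what has to be verified'' --- is precisely what fails when $d>1$, and you never verify it.

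The holomorphicity endgame has the same defect. After any choice of branch, the map in the paper's coordinates is $z\mapsto d\,|z|^{(d-1)/d}z^{1/d}$, which is not holomorphic even off the divisor; in honest plumbing coordinates it is $t\mapsto c\,t^{1/d}$, which is holomorphic on a branch but has monodromy $e^{2\pi i/d}$ around $\{t=0\}$, so no single-valued branch exists on any punctured neighborhood of the divisor. Riemann's removable singularity theorem therefore has nothing to act on: ``continuous everywhere, holomorphic off the divisor'' is never achieved. Note where the ambiguity genuinely dies: $\widetilde{t}_{\gamma}$ \emph{is} an element of $Mod(\widetilde{S}_{g(\alpha)})$, so the map is well defined after passing to the full quotient $\widehat{M}(\widetilde{S}_{g(\alpha)})$, and the $\mathbb{Z}/d$-rotation it induces on $z_{\widetilde{\gamma}}$ becomes part of the local orbifold structure there (the invariant coordinate $z_{\widetilde{\gamma}}^{d}$ pulls back holomorphically). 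That observation could plausibly repair Theorem 1.3, but it does not prove the present lemma, whose target is the Dehn-twist quotient $\mathcal{Q}_{\widetilde{\mathcal{A}}_{g(\alpha)}}(\widetilde{S}_{g(\alpha)})$; as written, your argument is incomplete at its central step.
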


\begin{proof}  From Lemma $3.1$, there is an isometric embedding $\widehat{\Gamma}_{\alpha}:U_{\mathcal{A}}(S)\rightarrow U_{\widetilde{\mathcal{A}}_{g(\alpha)}}(\widetilde{S}_{g(\alpha)})$. Since the covering mapping $\alpha $ can induce a monomorphism from the subgroup $\Delta_{\mathcal{A}}(S)$ of $Mod(S)$ to the subgroup $\Delta_{\widetilde{\mathcal{A}}_{g(\alpha)}}(\widetilde{S}_{g(\alpha)})$ of $Mod(\widetilde{S}_{g(\alpha)})$, the isometric embedding $\widehat{\Gamma}_{\alpha}:U_{\mathcal{A}}(S)\rightarrow U_{\widetilde{\mathcal{A}}_{g(\alpha)}}(\widetilde{S}_{g(\alpha)})$ can induce a continuous mapping $\mathcal{E}_{\alpha}:\mathcal{Q}_{\mathcal{A}}(S)\rightarrow \mathcal{Q}_{\widetilde{\mathcal{A}}_{g(\alpha)}}(\widetilde{S}_{g(\alpha)})$.

For any  point $p$ of $\mathcal{Q}^{\mathcal{A}}_{\mathcal{A}}(S)$, there exists a point $\widetilde{p}:=\mathcal{E}_{\alpha}(p)$ in $\mathcal{Q}^{\widetilde{\mathcal{A}}_{g(\alpha)}}_{\widetilde{\mathcal{A}}_{g(\alpha)}}(\widetilde{S}_{g(\alpha)})$. In addition, using Lemma $2.2$,  there exists  a  neighborhood $\widetilde{V}_{1\ast}\subset\mathcal{P}_{\widetilde{\mathcal{A}}_{g(\alpha)}}(\widetilde{S}_{g(\alpha)})$  such that $\widetilde{V}_{1}=\widetilde{\Psi}(\widetilde{V}_{1\ast})$ is an open set of $\widetilde{p}$ in $\mathcal{Q}_{\widetilde{\mathcal{A}}_{g(\alpha)}}(\widetilde{S}_{g(\alpha)})$ and $\widetilde{\Psi}:\widetilde{V}_{1\ast}\rightarrow \widetilde{V}_{1}$ is a homeomorphism. Since $\mathcal{E}_{\alpha}$ is a continuous mapping,  $V_{1}=\mathcal{E}_{\alpha}^{-1}(\widetilde{V}_{1})$ is an open set  in $ \mathcal{Q}_{\mathcal{A}}(S)$. Using Lemma $2.2$ again, there exists  a  neighborhood $V_{2\ast}\subset\mathcal{P}_{\mathcal{A}}(S)$ such that $V_{2}=\Psi(V_{2\ast})$ is an open set in $\mathcal{Q}_{\mathcal{A}}(S)$ and $\Psi:V_{2\ast}\rightarrow V_{2}$ is a homeomorphism. Then it has a continuous mapping $\widetilde{\Psi}^{-1}\circ\mathcal{E}_{\alpha}\circ\Psi: \Psi^{-1}(V_{1}\cap V_{2})\rightarrow \widetilde{\Psi}^{-1}\circ\mathcal{E}_{\alpha}(V_{1}\cap V_{2})$, where $\Psi^{-1}(V_{1}\cap V_{2})\subset\mathcal{P}_{\mathcal{A}}(S)=T\times \mathbb{D}^{\mathcal{A}}$ and $\widetilde{\Psi}^{-1}\circ\mathcal{E}_{\alpha}(V_{1}\cap V_{2})\subset\mathcal{P}_{\widetilde{\mathcal{A}}_{g(\alpha)}}(\widetilde{S}_{g(\alpha)})=\widetilde{T}_{g(\alpha)}\times \mathbb{D}^{\widetilde{\mathcal{A}}_{g(\alpha)}}$ are  complex sub-manifolds. We know that  $T\subset T_{S^{\mathcal{A}}}$ and $\widetilde{T}_{g(\alpha)}\subset T_{\widetilde{S}^{\widetilde{\mathcal{A}}_{g(\alpha)}}}$ are two products of  Teichm\"uller spaces of the components. It is showed in \cite{KI} that there exists a  holomorphic embedding  from every factor of  $T_{S^{\mathcal{A}}} $ to the corresponding factor of $T_{\widetilde{S}^{\widetilde{\mathcal{A}}_{g(\alpha)}}}$ by  $\alpha:\widetilde{S}_{g(\alpha)}\rightarrow S$. For $\gamma\in\mathcal{A}$,
the factor of $\gamma$ is $z_{\gamma}= l_{\gamma}e^{2\pi i\tau_{\gamma}/ l_{\gamma}}\in\mathbb{D}$, then the factor of one component $\widetilde{\gamma}$ of $\alpha^{-1}(\gamma)$ can be represented as $z_{\widetilde{\gamma}}= \widetilde{l}_{\gamma}e^{2\pi i\widetilde{\tau}_{\gamma}/ \widetilde{l}_{\gamma}}=ml_{\gamma}e^{2\pi im\tau_{\gamma}/ (ml_{\gamma})}=mz_{\gamma}$ in a neighborhood of the original point, where $m$ is a positive integer. Therefore, $\widetilde{\Psi}^{-1}\circ\mathcal{E}_{\alpha}\circ\Psi: \Psi^{-1}(V_{1}\cap V_{2})\rightarrow \widetilde{\Psi}^{-1}\circ\mathcal{E}_{\alpha}(V_{1}\cap V_{2})$ is a holomorphic mapping.

For any point $p$ of $\mathcal{Q}_{\mathcal{A}}(S)-\mathcal{Q}^{\mathcal{A}}_{\mathcal{A}}(S)$, it belongs to some stratum $\mathcal{Q}^{\mathcal{A}'}_{\mathcal{A}}(S)$. According to Theorem $A,$ there is a holomorphic map $\mathcal{P}^{\mathcal{A}'}_{\mathcal{A}}:\mathcal{Q}_{\mathcal{A}'}(S)\rightarrow\mathcal{Q}_{\mathcal{A}}(S)$, which consists of quotienting by $\Delta_{\mathcal{A}-\mathcal{A}'}(S)$.   Since the curves of $\mathcal{A}-\mathcal{A}'$ are not collapsed at $p$, the curves of $\widetilde{\mathcal{A}}_{g(\alpha)}-\widetilde{\mathcal{A}}'_{g(\alpha)}$ are not collapsed at $\widetilde{p}:=\mathcal{E}_{\alpha}(p)$.  Choose as a local chart at $p$ a section of $\mathcal{P}^{\mathcal{A}'}_{\mathcal{A}}$ over a neighborhood of $p$ contained in $\mathcal{Q}_{\mathcal{A}}(S)$. Let $p'$  be a preimage of $p$ in the section of  $\mathcal{P}^{\mathcal{A}'}_{\mathcal{A}}$ . Similar to  the above analysis, there exist two neighborhoods $V_{1}$ and $V_{2}$ of $p$ in $\mathcal{Q}_{\mathcal{A}}(S)$ such that $(\widetilde{\Psi}')^{-1}\circ\big(\mathcal{P}^{\widetilde{\mathcal{A}}'_{g(\alpha)}}_{\widetilde{\mathcal{A}}_{g(\alpha)}}\big)^{-1}\circ\mathcal{E}_{\alpha}
\circ\mathcal{P}^{\mathcal{A}'}_{\mathcal{A}}\circ\Psi': (\Psi')^{-1}\circ\big(\mathcal{P}^{\mathcal{A}'}_{\mathcal{A}}\big)^{-1}(V_{1}\cap V_{2})\rightarrow (\widetilde{\Psi}')^{-1}\circ\big(\mathcal{P}^{\widetilde{\mathcal{A}}'_{g(\alpha)}}_{\widetilde{\mathcal{A}}_{g(\alpha)}}\big)^{-1}
\circ\mathcal{E}_{\alpha}(V_{1}\cap V_{2})$ is a holomorphic mapping, where $\big(\mathcal{P}^{\widetilde{\mathcal{A}}'_{g(\alpha)}}_{\widetilde{\mathcal{A}}_{g(\alpha)}}\big)^{-1}\circ\mathcal{E}_{\alpha}
\circ\mathcal{P}^{\mathcal{A}'}_{\mathcal{A}}:\mathcal{Q}_{\mathcal{A}'}(S)\rightarrow\mathcal{Q}_{\widetilde{\mathcal{A}}'_{g(\alpha)}}(\widetilde{S}_{g(\alpha)})$ is a continuous mapping.

Therefore, we obtain a holomorphic mapping $\mathcal{E}_{\alpha}:\mathcal{Q}_{\mathcal{A}}(S)\rightarrow\mathcal{Q}_{\widetilde{\mathcal{A}}_{g(\alpha)}}(\widetilde{S}_{g(\alpha)})$.

\end{proof}

\noindent{\bf Proof of Theorem 1.3}\; {\em Since  $\alpha:\widetilde{S}_{g(\alpha)}\rightarrow S$ is a  characteristic covering,  then there exists a monomorphism: $L_{\alpha}: Mod(S)\rightarrow Mod(\widetilde{S}_{g(\alpha)})$. From Lemma $4.1$, there is a holomorphic mapping
$$
\mathcal{E}_{\alpha}:\bigcup_{\mathcal{A}}\mathcal{Q}_{\mathcal{A}}(S)\rightarrow\bigcup_{\widetilde{\mathcal{A}}_{g(\alpha)}}
\mathcal{Q}_{\widetilde{\mathcal{A}}_{g(\alpha)}}(\widetilde{S}_{g(\alpha)}),
$$
then
$$
\widehat{\Phi}_{\alpha}:\bigg(\bigcup_{\mathcal{A}}\mathcal{Q}_{\mathcal{A}}(S)\bigg)/Mod(S)
\rightarrow\bigg(\bigcup_{\widetilde{\mathcal{A}}_{g(\alpha)}}\mathcal{Q}_{\widetilde{\mathcal{A}}_{g(\alpha)}}(\widetilde{S}_{g(\alpha)})\bigg)
/Mod(\widetilde{S}_{g(\alpha)})
$$ is a holomorphic mapping. This theorem is proved completely.}

 \begin{defin}\label{1} From the directed set $Mor(\mathcal{C}^{ch}(S))$,  the directed limit of augmented moduli spaces over $S$ is defined as follows:
 $$
 \widehat{M}_{\infty}(S):=\text{dir.}\lim.\widehat{M}(\widetilde{S}_{g(\alpha)}), \quad \alpha\in Mor(\mathcal{C}^{ch}(S)),
$$
which is called the universal commensurability augmented moduli space.
\end{defin}

\begin{cor}\label{1}The  universal commensurability augmented moduli space $\widehat{M}_{\infty}(S)$ has a complex structure.
\end{cor}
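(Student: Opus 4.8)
The plan is to realize $\widehat{M}_{\infty}(S)$ as the directed limit of a system of complex analytic spaces with holomorphic connecting maps, and then to transport the complex structure to the limit stage by stage. First I would record the two ingredients already in place: by Theorem B each augmented moduli space $\widehat{M}(\widetilde{S}_{g(\alpha)})$, $\alpha\in Mor(\mathcal{C}^{ch}(S))$, carries the structure of an analytic orbifold, hence a complex structure; and by Theorem 1.3 every characteristic covering induces a holomorphic map between the corresponding augmented moduli spaces.

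Next I would produce the connecting morphisms of the directed system. Given $\alpha\prec_{ch}\beta$ in the characteristic tower, by definition $\beta=\alpha\circ\theta$ with $\theta:\widetilde{S}_{g(\beta)}\rightarrow\widetilde{S}_{g(\alpha)}$ again a characteristic covering. Applying Theorem 1.3 with $\widetilde{S}_{g(\alpha)}$ in the role of the base surface yields a holomorphic map $\widehat{\Phi}_{\theta}:\widehat{M}(\widetilde{S}_{g(\alpha)})\rightarrow\widehat{M}(\widetilde{S}_{g(\beta)})$; these are exactly the transition maps of the directed limit defining $\widehat{M}_{\infty}(S)$. I would then verify the cocycle condition: for $\alpha\prec_{ch}\beta\prec_{ch}\gamma$ with $\beta=\alpha\circ\theta_{1}$ and $\gamma=\beta\circ\theta_{2}$, the relation $\widehat{\Phi}_{\theta_{2}}\circ\widehat{\Phi}_{\theta_{1}}=\widehat{\Phi}_{\theta_{1}\circ\theta_{2}}$ holds. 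This is inherited from the factorization $\widehat{\Gamma}_{\beta}=\widehat{\Gamma}_{\theta}\circ\widehat{\Gamma}_{\alpha}$ of the isometric embeddings, together with the functoriality of the quotient construction $\mathcal{E}_{\alpha}$ of Lemma 4.1, since each $\widehat{\Phi}$ is descended from the corresponding $\mathcal{E}$ through the monomorphisms $L_{\theta}$ of mapping class groups.

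With a directed system of analytic orbifolds and holomorphic connecting maps in hand, I would define the complex structure on $\widehat{M}_{\infty}(S)$ as follows. A point $\dot{\mathcal{X}}_{\infty}\in\widehat{M}_{\infty}(S)$ is represented at some finite stage by $\dot{\mathcal{X}}\in\widehat{M}(\widetilde{S}_{g(\alpha)})$; the local analytic chart of $\widehat{M}(\widetilde{S}_{g(\alpha)})$ at $\dot{\mathcal{X}}$, furnished by Theorems A and B through the plumbing coordinates of $\mathcal{P}_{\mathcal{A}}(S)$, serves as a chart for the limit near $\dot{\mathcal{X}}_{\infty}$. Holomorphicity of the connecting maps guarantees that this assignment is compatible under passage to a finer stage, so the resulting structure is independent of the chosen representative, and $\widehat{M}_{\infty}(S)$ thereby acquires a complex structure as an inductive limit of finite dimensional analytic spaces, mirroring the complex structure carried by $M_{\infty}(S)$.

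The main obstacle I anticipate is the non-manifold, orbifold nature of the augmented moduli spaces, together with the fact that the maps $\widehat{\Phi}_{\theta}$ need not be embeddings. One must check that each connecting map is holomorphic across \emph{all} strata and not merely on the top stratum, which is precisely the content of Lemma 4.1 executed stratum by stratum, and that the induced charts glue compatibly through the orbifold singularities. Verifying this compatibility at the deepest strata, where several plumbing parameters vanish simultaneously, is the delicate point on which the argument turns.
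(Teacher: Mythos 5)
Your proposal is correct and follows essentially the same route as the paper: the paper states this corollary without any written proof, treating it as an immediate consequence of Theorem B (each $\widehat{M}(\widetilde{S}_{g(\alpha)})$ is an analytic orbifold) and Theorem 1.3 (characteristic coverings induce holomorphic maps between augmented moduli spaces, via Lemma 4.1), so that the directed limit over $Mor(\mathcal{C}^{ch}(S))$ inherits a complex structure stage by stage. Your write-up simply makes explicit what the paper leaves implicit --- the connecting maps $\widehat{\Phi}_{\theta}$ for $\alpha\prec_{ch}\beta$, the cocycle condition, and the chart compatibility across strata --- and your closing caveat about the connecting maps not being embeddings is a genuine subtlety that the paper itself does not address.
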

Next, we show that the subgroup $Caut(\pi_{1}(S))$ acts on $\widehat{T}_{\infty}(S)$ to produce $\widehat{M}_{\infty}(S)$ as the quotient.

	\vskip 4pt

\noindent{\bf Proof of Theorem 1.4}\; {\em The directed system $Mor(\mathcal{C}^{ch}(\widehat{T}(S)))$ of augmented Teichm\"uller spaces is  the cofinal sub-system of $Mor(\mathcal{C}(\widehat{T}(S)))$ and set $\widehat{T}^{ch}_{\infty}(S) $ the corresponding directed limit space. The inclusion of $\mathcal{C}^{ch}(S)$ in $\mathcal{C}(S)$ induces a natural homeomorphism of $\widehat{T}^{ch}_{\infty}(S) $ onto $\widehat{T}_{\infty}(S) $. Clearly, it follows from the definition of the subgroup $Caut(\pi_{1}(S))$  that  $Caut(\pi_{1}(S))$ acts on $\widehat{T}^{ch}_{\infty}(S) $ to
produce $\widehat{M}_{\infty}(S)$ as the quotient. Therefore, identifying $\widehat{T}^{ch}_{\infty}(S) $ with $\widehat{T}_{\infty}(S) $ by the
above homeomorphism,  we obtain the result.}

\section{Acknowledgements}
 We  thank the referees for many important and useful comments!

\scriptsize

\end{document}